\newcommand{\Perv}{\mathop{\mathrm{Perv}}}
\title{Categorification of Legendrian knots}
\author{Tatsuki Kuwagaki}
\date{{\em dedicated to Saito Kyoji sensei on his 75th birthday}}
\begin{document}

\maketitle

\begin{abstract}
Perverse schober defined by Kapranov--Schechtman is a categorification of the notion of perverse sheaf. In their definition, a key ingredient is certain purity property of perverse sheaves. In this short note, we attempt to describe a real analogue of the above story, as categorification of Legendrian points/knots. The notion turns out to include various notions such as semi-orthogonal decomposition, mutation braiding, spherical functor, N-spherical functor, and irregular perverse schober.
\end{abstract}

\section{Perverse schober}
{\em Perverse schober} is a categorification of the notion of perverse sheaf, found by Kapranov--Schechtman~\cite{KapS}. In this section, let us recall their observations over a one-punctured disk briefly.

Let $\bD$ be a standard open disk in $\bC$ centered at $0$. We will consider the category of perverse sheaves with singularity at $0$ and denote it by $\Perv(\bD,0)$. The category is known to have the following linear-algebraic description: Let $\cC$ be the category given by the following data:
\begin{enumerate}
\item Object: a pair of vector spaces $(V,W)$ with a pair of linear maps $f\colon V\rightarrow W$ and $g\colon W\rightarrow V$ satisfying $\id_V-fg$ and $\id_W-gf$ are invertible.
\item Morphism: compatible linear maps.
\end{enumerate}

\begin{theorem}[Beilinson~\cite{Beilinson}]
There exists an equivalence between $\cC$ and $\Perv(\bD,0)$.
\end{theorem}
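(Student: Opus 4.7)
The plan is to exhibit an equivalence via nearby and vanishing cycles at the puncture. Let $\Psi$ and $\Phi$ denote the nearby- and vanishing-cycle functors at $0$, and let $\mathrm{can}\colon \Psi \to \Phi$ and $\mathrm{var}\colon \Phi \to \Psi$ be the canonical and variation natural transformations. For $\mathcal{F} \in \Perv(\bD,0)$, define
\[
F(\mathcal{F}) := (\Psi\mathcal{F},\ \Phi\mathcal{F},\ \mathrm{can}_{\mathcal{F}},\ \mathrm{var}_{\mathcal{F}}).
\]
The standard identities $T_{\Psi\mathcal{F}} = \id - \mathrm{var}\circ\mathrm{can}$ and $T_{\Phi\mathcal{F}} = \id - \mathrm{can}\circ\mathrm{var}$, combined with the invertibility of the monodromy operator $T$ on nearby/vanishing cycles, produce exactly the defining invertibility conditions of $\cC$. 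Functoriality is inherited from $\Psi$ and $\Phi$.

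In the other direction, given $(V,W,f,g)\in\cC$, I construct a perverse sheaf on $\bD$ via Beilinson's gluing/maximal-extension formalism. Let $j\colon \bD\setminus\{0\}\hookrightarrow\bD$ and $i\colon\{0\}\hookrightarrow\bD$, and let $\mathcal{L}$ be the local system on $\bD\setminus\{0\}$ with fiber $V$ and monodromy $\id_V - gf$, which is invertible by hypothesis. Using $f$ and $g$ I glue the perverse extensions $j_!\mathcal{L}[1]$ and $j_*\mathcal{L}[1]$ to the skyscraper $i_*W$, producing a two-term complex $G(V,W,f,g)$ on $\bD$ whose nearby and vanishing cycles are, by construction, $V$ and $W$ with connecting maps $f$ and $g$. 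The support and cosupport conditions defining perversity on $\bD$ then have to be checked: the restriction to the punctured disk is the shifted local system, while the stalk and costalk at $0$ are controlled by $W$ and sit in the correct degrees.

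It remains to check that $F$ and $G$ are mutually quasi-inverse. For $F \circ G \simeq \id_{\cC}$, one computes $\Psi$ and $\Phi$ on the gluing directly: by design $\Psi = V$ with monodromy $\id - gf$, $\Phi = W$, and the connecting maps recover $f$ and $g$. For $G \circ F \simeq \id_{\Perv(\bD,0)}$, the content is Beilinson's gluing statement: every perverse sheaf on $\bD$ with singularity at $0$ is canonically reconstructed from its $(\Psi,\Phi,\mathrm{can},\mathrm{var})$-data.

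The main obstacle, in my view, lies in setting up $G$ carefully and proving it actually lands in $\Perv(\bD,0)$. Once this is done, full faithfulness and essential surjectivity follow from the exactness and conservativity of the pair $(\Psi,\Phi)$ on $\Perv$, together with an $\mathrm{Ext}$-computation against the standard generators $j_!\mathcal{L}[1]$, $j_*\mathcal{L}[1]$, and $i_*W$, which becomes a routine linear-algebraic verification once the gluing functor is in place.
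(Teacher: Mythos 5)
The paper itself offers no proof of this theorem—it is stated as a citation to Beilinson—so there is nothing in the text to compare against, and I will assess your sketch on its own terms. Your route is the standard one for this result: define $F$ by the nearby/vanishing cycle functors together with $\mathrm{can}$ and $\mathrm{var}$, use the identities $T_\Psi = \id - \mathrm{var}\circ\mathrm{can}$ and $T_\Phi = \id - \mathrm{can}\circ\mathrm{var}$ together with invertibility of monodromy to land in $\cC$, and build the inverse $G$ via Beilinson gluing. This is correct as an outline, and you are right that the real content is in $G$ and in showing it lands in $\Perv(\bD,0)$. Two points worth flagging. First, your labeling is opposite to the paper's: the text sets $V := \RGamma_L(\cE)_0$ (vanishing cycles) and $W := \RGamma_L(\cE)_x$ (nearby cycles), whereas you put $V = \Psi\mathcal{F}$; this is harmless because $\cC$ is manifestly symmetric under $(V,W,f,g)\mapsto(W,V,g,f)$, but it is worth saying explicitly to avoid confusion when checking that $F$ and $G$ are inverse. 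Second, the phrase ``glue the perverse extensions $j_!\mathcal{L}[1]$ and $j_*\mathcal{L}[1]$ to the skyscraper $i_*W$'' hides the actual mechanism: Beilinson's construction goes through the maximal extension functor $\Xi_f$ and its two short exact sequences $0\to j_!\mathcal{L}[1]\to\Xi\to\Psi\to 0$ and $0\to\Psi\to\Xi\to j_*\mathcal{L}[1]\to 0$, and $G(V,W,f,g)$ is obtained as the middle cohomology of a two-step complex $\Psi \to \Xi\oplus W \to \Psi$ built from $f$, $g$, and these sequences. Without naming $\Xi$ the construction of $G$ does not quite get off the ground, so I would make that explicit rather than leaving it to ``an $\mathrm{Ext}$-computation.'' With that one repair the sketch is a faithful rendering of Beilinson's argument.
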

For a given perverse sheaf, the two vector spaces are given by the space of vanishing cycles and nearby cycles, or more explicitly, $V:=\RGamma_L(\cE)_0$ and $W:=\RGamma_L(\cE)_x$ for a perverse sheaf $\cE$ where $L$ is the interval inside the disk $\bD$ (Figure 1.1) and $\RGamma_L(\cdot)$ is the local cohomology sheaf. 
\begin{figure}[h]
\begin{center}
  \includegraphics[width=5cm]{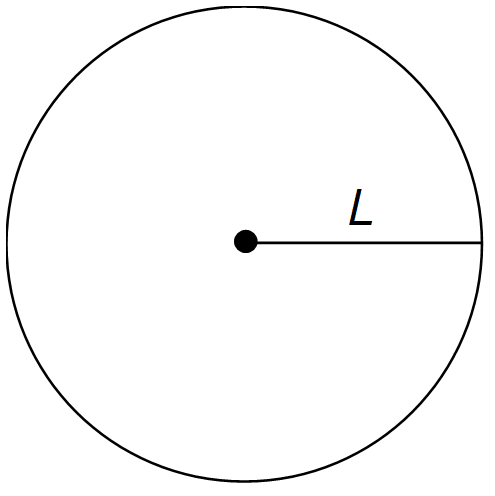}
\end{center}
\caption{skeleton $L$}
\end{figure}

   Even though a perverse sheaf is a complex of sheaves, its vanishing cycles and nearby cycles are vector spaces with a single degree. This {\em purity} property enables Kapranov--Schechtman to consider a categorification of perverse sheaves even with the lack of the definition of ``complexes of categories''.

They define a categorification in the following way. The data is the following: two stable dg-categories $\cC$ and $\cD$, a pair of adjoint functors $F\colon \cC\rightarrow \cD$, the left adjoint $F^L\colon \cD\rightarrow \cC$, and the right adjoint $F^R\colon \cD\rightarrow \cC$ satisfying $\Cone(\id_{\cC}\rightarrow F^RF)$ and $\Cone(FF^R\rightarrow \id_{\cD})$ are autoequivalences. Then the induced morphisms between the Grothendieck groups $K_0(\cC)\otimes_\bZ\bC$ and $K_0(\cD)\otimes_\bZ\bC$ gives a perverse sheaf by Beilinson's theorem. Hence this notion is actually a categorification of perverse sheaf and it turns out that this notion was previously known as a spherical functor by Anno--Logvinenko~\cite{AL}. Kapranov--Schechtman considered speherical functor as one representation of categorification of perverse sheaf (``perverse schober'') over $\bD$ with one singularity. Actually, there are other realizations if we choose other skeletons like in Figure 1.2. 
\begin{figure}[h]
\begin{center}
  \includegraphics[width=5cm]{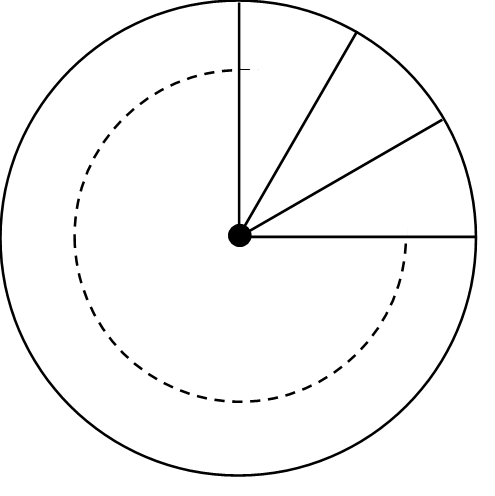}
\end{center}
\caption{$A_n$-skeleton}
\end{figure}

For example, if we have $n=2$, then this gives a notion of {\em spherical pair}, which is also a categorification of perverse sheaves. Also,
they can be defined over general surfaces with arbitrary number of singular points.

There are many interesting examples of perverse schobers coming from VGIT wall-crossing \cite{Donovan1, Donovan2}, Flops \cite{BKS}, mirror symmetry \cite{Nadler, DonovanKuwagaki, HarKat}.

\section{Purity in microlocal sheaf theory}
Next, we would like to describe a real analogue. Let $M$ be either $\bR$ or $\bR^2$. Let $C$ be a compact manifold with dimension equals to $\dim M-1$ (possibly with multiple connected components) and $\iota\colon C \rightarrow M$ be an immersion. Then the conormal bundle of $S:=\iota(C)$ has two components over each component of $C$. We choose one component of the conormal bundle over each component of $C$, which we say a choice of {\em co-orientation}.

The co-orientation is a conical Lagrangian subset $L:=L_C$ of $T^*M$. It is the same as the data of Legendrian point/knot $K:=K_L$ at contact infinity of $T^*M$ i.e. $L=\bR_{>0}\cdot K$. We set $L(K):=L\cup T^*_MM$ where  $T^*_MM$ is the zero section.

We would like to consider a (weakly) constructible sheaf $\cE$ over $M$ whose microsupport satisfies $\SS(\cE)\subset L(K)$. For readers who are not familiar with the notion of microsupport defined by Kashiwara--Schapira \cite{KS}, we would like to explain it in some plain words.

For simplicity, we further assume that the cardinality of each fiber of $\iota$ is at most two.
\begin{condition}
\begin{enumerate}
\item Let $S_{sm}$ be the smooth locus of $S=\iota(C)$ and $S_{sing}$ be the singular locus. Then we have a decomposition $M=S_{sm}\sqcup S_{sing}\sqcup (M\bs S)$. Then the first condition is that a sheaf valued in $\bC$-vector spaces $\cE$ is constructible with respect to this decomposition i.e. For each stratum $\sigma$ of the decomposition, the restriction $\cE|_\sigma$ is a locally constant sheaf.

\item Let us take $p\in K$, in other words, let us pick a ray (a single orbit of the $\bR_{>0}$-action) in $L$ with the condition $x:=\pi(p)\in S_{sm}$ where $\pi\colon T^*M\rightarrow M$ is the projection. Take a small neighborhood $U$ of $x$ such that $U\cap S\subset S_{sm}$ and $U\bs S_{sm}$ has exactly two components (Figure \ref{fig2.1}). The one of two components of $U\bs S_{sm}$ is denoted by $U_+$ if it is in the direction of $p$. Other one is denoted by $U_-$. Then the second condition asks that the restriction morphism $\RGamma(U, \cE)\rightarrow \RGamma(U_+,\cE)$ is a quasi- isomorphism.
\end{enumerate}
\end{condition}

\begin{figure}[h]
\begin{center}
  \includegraphics[width=5cm]{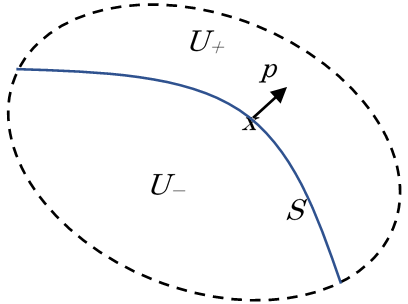}
\end{center}
\caption{Defining microsupport}\label{fig2.1}
\end{figure}

Let us also pick $x_+\in U_+$ and $x_-\in U_-$. By the condition 1, the restriction map $\RGamma(U, \cE)\rightarrow \cE_{x}$ and $\RGamma(U_+, \cE)\rightarrow \cE_{x_+}$ are isomorphisms. So the condition 2 asks that the canonical morphism $\cE_x\rightarrow \cE_{x_+}$ is a quasi-isomorphism or not. This condition of course does not depend on the choice of $x_+$, $x_-$. Also, it is independent among the choice of $x$ inside one component of $S_{sm}$. There exists the following fact.

\begin{lemma}
Condition 2.1 is equivalent to $\SS(\cE)\subset L(K)$.
\end{lemma}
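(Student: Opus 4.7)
The plan is to verify the equivalence pointwise in $T^*M$, using the microsupport bound from constructibility as a starting point and then a direct local-cohomology computation at smooth points of $S$. Condition 1 ensures that $\cE$ is weakly constructible with respect to the stratification $M=S_{sm}\sqcup S_{sing}\sqcup (M\setminus S)$, so the Kashiwara--Schapira bound on microsupports of constructible sheaves immediately gives
\[
\SS(\cE)\subset T^*_{S_{sm}}M\cup T^*_{S_{sing}}M\cup T^*_MM.
\]
Since $T^*_MM\subset L(K)$, the remaining task is to show that the fibers of $\SS(\cE)$ over $S_{sm}$ (and, with a bit more care, over $S_{sing}$) lie in $L(K)$.

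For the smooth part, I would fix $x\in S_{sm}$ and let $p\in K$ be the chosen co-orienting covector with $\pi(p)=x$. The conormal fiber to $S_{sm}$ at $x$ decomposes as $\bR_{>0}\cdot p\cup\bR_{>0}\cdot(-p)$; the first ray is in $L$ by definition, so only $\bR_{>0}\cdot(-p)$ needs to be excluded. Choose a smooth function $\phi$ near $x$ with $\phi(x)=0$ and $d\phi(x)=-p$, so that a small neighborhood $U$ of $x$ satisfies $\{\phi<0\}\cap U=U_+$. The standard distinguished triangle
\[
(\RGamma_{\{\phi\geq 0\}}\cE)_x\to \cE_x\to (Rj_*\,\cE|_{\{\phi<0\}})_x\xrightarrow{+1},\qquad j\colon\{\phi<0\}\hookrightarrow M,
\]
combined with the identifications $\cE_x\simeq \RGamma(U,\cE)$ and $(Rj_*\,\cE|_{\{\phi<0\}})_x\simeq \RGamma(U_+,\cE)$ furnished by Condition 1 for $U$ small, identifies the local cohomology with the cone of the restriction $\RGamma(U,\cE)\to \RGamma(U_+,\cE)$. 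By the Kashiwara--Schapira criterion, $-p\notin\SS(\cE)$ if and only if this cone vanishes, which is exactly Condition 2.1. Applying the same triangle to $+p$ instead produces $\RGamma(U_-,\cE)$ on the right and imposes no constraint, so the ray $L$ itself is not excluded. This yields the equivalence $\SS(\cE)\cap T^*_{S_{sm}}M\subset L$ $\Leftrightarrow$ Condition 2.1.

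The main obstacle is controlling $\SS(\cE)$ over $S_{sing}$, where Condition 2.1 says nothing directly and constructibility only gives $\SS(\cE)\cap T^*_{x_0}M\subset T^*_{x_0}M$. I would handle this by combining closedness and conicity of $\SS(\cE)$ with non-characteristic propagation: limits of covectors in $L$ from nearby smooth points of $S$ lie automatically in $L(K)$ because $L(K)$ is closed and conic in $T^*M$, so they contribute only the branch rays at $x_0$; any $\xi\in T^*_{x_0}M\setminus L(K)$ is non-conormal to both branches of $S$ at $x_0$, and a $\phi$ with $d\phi(x_0)=\xi$ can be chosen so that $\{\phi=0\}$ is transverse to each branch near $x_0$, reducing the computation of $(\RGamma_{\{\phi\geq 0\}}\cE)_{x_0}$ to the smooth-point analysis already performed. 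Reversing each step gives the converse implication and completes the equivalence.
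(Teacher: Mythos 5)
The paper gives no proof of this lemma; it is presented simply as a known fact (``There exists the following fact.''), so there is nothing in the text to compare your argument against.

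Your treatment of the smooth locus is correct and is the standard translation between Kashiwara--Schapira's definition of $\SS$ and the generization-map formulation: Condition~1 gives the coarse bound $\SS(\cE)\subset T^*_{S_{sm}}M\cup T^*_{S_{sing}}M\cup T^*_MM$, and the local-cohomology triangle at a smooth point $x\in S_{sm}$ identifies $(\RGamma_{\{\phi\geq0\}}\cE)_x$ with the shifted cone of $\RGamma(U,\cE)\to\RGamma(U_+,\cE)$, so that $-p\notin\SS(\cE)$ exactly when Condition~2 holds there. This part is fine.

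The step over $S_{sing}$ does not work, and in fact the implication ``Condition 2.1 $\Rightarrow \SS(\cE)\subset L(K)$'' fails over double points without some additional input. Take $M=\bR^2$, $S$ the union of the two coordinate axes, and co-orient each axis toward the closed first quadrant $Q=\{x\ge0,\,y\ge0\}$. The sheaf $\cE=\bC_Q$ is constructible for the stratification in Condition~1, and the generization maps along the two co-oriented directions over each smooth half-axis are isomorphisms (either $\bC\to\bC$ or $0\to0$), so Condition~2.1 holds. Nevertheless $\SS(\bC_Q)$ over the origin is the full two-dimensional dual cone $\{(a,b):a,b\ge0\}$, not just the two conormal rays making up $L(K)$ there; for instance $\xi=(1,1)$ and $\phi(x,y)=x+y$ give $(\RGamma_{\{\phi\geq0\}}\bC_Q)_0=\bC\ne0$, even though $\{\phi=0\}$ is transverse to both branches. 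This is precisely where your argument breaks: transversality of $\{\phi=0\}$ to the branches does not reduce $(\RGamma_{\{\phi\geq0\}}\cE)_{x_0}$ to the smooth-point computation, because the relevant non-characteristicity hypothesis is on $\SS(\cE)$ at $x_0$, which is exactly what is unknown. The correct constraint at a crossing is an additional exactness condition on the four stalks (the sequence $\cE_N\to\cE_W\oplus\cE_E\to\cE_S$ from \cite{STZ}, quoted later in the paper), which is not implied by Condition~2.1. So either the lemma should be read with $\iota$ an embedding (hence $S_{sing}=\emptyset$, and your smooth-locus argument is the whole proof), or the microsupport condition over $S_{sing}$ has to be imposed separately; your appeal to closedness, conicity and non-characteristic propagation does not close this gap.
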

Hence one can consider Condition 2.1 as the definition.

In the above setup, we also have a canonical morphism $\cE_{x}\rightarrow \cE_{x_-}$. We set
\begin{equation}
\cE_p:=\Cone(\cE_{x}\rightarrow \cE_{x_-}),
\end{equation}
which is a priori a complex of vector spaces. This is called {\em microstalk} of $\cE$ at $p$.

The following definition is made by Kashiwara--Schapira \cite{KS}.
\begin{definition}
We say $\cE$ is {\em pure} if $\cE_p$ is concentrated in degree $0$ for all $p\in K$.
\end{definition}
We will use this purity to get a categorification of Legendrian points and knots in the following sections.

\section{Categorification of Legendrian points}
In this section, we would like to discuss the case of $M=\bR$. Then $C=\{x_{1/2},...,x_{n-1/2}\}$ is a finite set of points. With this notation, we mean $x_{i+1/2}$ is on the right of $x_{j+1/2}$ if $i>j$. Let us fix the co-orientation over $C$ which gives $L_C\subset T^*M$. Let $\bR\bs C=\sqcup_{i\in 0,..., n}J_i$ be the decomposition into the connected components where the boundaries of $J_i$ is $x_{i-1/2}$ and $x_{i+1/2}$. Let $J_i, J_{i+1}$ be the adjacent intervals i.e., the closures of them intersect. 

Let $\cE$ be a sheaf micro-supported in $L(K)$ and we assume it is pure. Take $y_i\in J_i$ and $y_{i+1}\in J_{i+1}$. 

Suppose that the co-orientation over $x_{i/2}$ is positive. By the discussion of the definition of microsupport, there exists an identification $\cE_{x_{i+1/2}}\cong \cE_{y_{i+1}}$ and we have a generalization map from $\cE_{x_{i+1/2}}$ to $\cE_{y_{i}}$. Combining these we have a map $f_{i+1/2}\colon \cE_{y_{i+1}}\rightarrow \cE_{y_{i}}$. If the co-orientation is negative, we get a map $f_{i+1/2}\colon \cE_{y_{i}}\rightarrow \cE_{y_{i+1}}$. By the purity, $\cE_{y_i}$, $\cE_{y_i+1}$, and the cone of these morphisms are all vector spaces (not complexes). This implies that $f_{i+1/2}$ is injective and the cone is the cokernel of $f_{i+1/2}$. Hence we have the following:

\begin{proposition}
The category of pure sheaves micro-supported in $L$ is equivalent to the category given by the following data:
\begin{enumerate}
\item Object: $(\{V_i\}_{i\in 0,...,n}, \{f_{i+1/2}\}_{i\in 0,...,n-1})$ where, for any $i$, $V_i$ is a finite-dimensional vector space, $f_{i+1/2}$ is an injective morphism from $V_i$ to $V_{i+1}$ if the co-orientation over $x_i$ is negative, $f_{i+1/2}$ is an injective morphism from $V_{i+1}$ to $V_{i}$ if the co-orientation over $x_i$ is positive,   
\item Morphism: compatible linear maps.
\end{enumerate}
\end{proposition}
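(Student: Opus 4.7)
The plan is to combine the standard combinatorial description of constructible sheaves on the real line with the microsupport and purity conditions already extracted in the discussion preceding the proposition.

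I would begin by recalling that a constructible complex of sheaves on $\bR$ with respect to the stratification $\{J_i\}\cup\{x_{i+1/2}\}$ is equivalent, via its exit path category, to the data of complexes $V_i$ and $W_{i+1/2}$ together with two generalization morphisms $W_{i+1/2}\to V_i$ and $W_{i+1/2}\to V_{i+1}$ at each singular point. By Lemma 2.2, the microsupport condition $\SS(\cE)\subset L(K)$ translates into the requirement that at each $x_{i+1/2}$ the generalization morphism pointing in the direction of the co-orientation is a quasi-isomorphism. Using this quasi-isomorphism to eliminate $W_{i+1/2}$, the remaining generalization morphism is precisely $f_{i+1/2}$, going from the co-oriented side to the opposite side.

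Next I would import the consequence of purity already worked out in the text: since $\cE_p=\Cone(f_{i+1/2})$ by (2.1), purity places this cone in degree $0$, and the long exact sequence of cohomology applied to the distinguished triangle on $V_i$, $V_{i+1}$, and $\cE_p$ then forces both $V_i$ and $V_{i+1}$ to lie in degree $0$ and $f_{i+1/2}$ to be an injection of vector spaces with cokernel $\cE_p$. This defines the functor from pure sheaves micro-supported in $L$ to the linear algebra category of the proposition.

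For the inverse, given data $(\{V_i\},\{f_{i+1/2}\})$ with each $f_{i+1/2}$ injective, I would construct a sheaf via the exit path description by taking $W_{i+1/2}$ to be the source of $f_{i+1/2}$, with the source-side generalization map being the identity and the opposite-side map being $f_{i+1/2}$. By construction this sheaf is constructible with $\SS\subset L(K)$ and is pure, since its microstalk at $p$ is the cokernel of $f_{i+1/2}$, which lies in degree $0$. Checking that the two functors are mutually quasi-inverse, and that morphisms of sheaves correspond to compatible tuples of linear maps, is then formal from the exit path description. The main point requiring care is the bookkeeping of co-orientations---tracking which side is $U_+$ at each $x_{i+1/2}$ and ensuring the direction of $f_{i+1/2}$ matches the statement; the substantive input, purity, has already been reduced to an injectivity statement in the paragraph preceding the proposition, so the remainder is essentially an unpacking of the combinatorial model of constructible sheaves on a stratified line.
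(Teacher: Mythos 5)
Your argument follows the same route as the paper's informal discussion preceding the proposition: use the microsupport condition to identify the stalk at $x_{i+1/2}$ with the stalk on the co-oriented side (eliminating the $W_{i+1/2}$ in your exit-path description), obtain a single generalization map $f_{i+1/2}$ between the interval stalks, and then invoke purity to get injectivity. The exit-path packaging and the explicit construction of the inverse functor are a bit more systematic than the paper's one-paragraph sketch, but the content is the same.

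There is, however, a genuine logical gap in one step. You write that since $\cE_p=\Cone(f_{i+1/2})$ is concentrated in degree $0$, ``the long exact sequence of cohomology applied to the distinguished triangle on $V_i$, $V_{i+1}$, and $\cE_p$ then forces both $V_i$ and $V_{i+1}$ to lie in degree $0$.'' This implication is false: having the cone concentrated in degree $0$ puts no constraint on the ambient degrees of the source and target (e.g.\ take $V_i=V_{i+1}=W[1]$ with $f$ the identity, cone $=0$; or $V_i=k[1]$, $V_{i+1}=k[1]\oplus k$, cone $=k$). Purity as stated in Definition 2.4 only controls the microstalks at $p\in K$, not the ordinary stalks $\cE_{y_i}$, and so cannot by itself yield the degree-$0$ claim. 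The paper's own sentence ``By the purity, $\cE_{y_i}$, $\cE_{y_{i+1}}$, and the cone \dots are all vector spaces'' elides the same point; the intended reading is presumably either that $\cE$ is taken to be an honest sheaf (concentrated in degree $0$) rather than a complex, or that purity is understood to include the zero-section part of $L(K)$, so that the ordinary stalks are also required to sit in degree $0$. With one of these conventions made explicit, the rest of your argument---injectivity from purity of the two-term cone, and the exit-path construction of the inverse functor---is correct.
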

So these sheaves are expressed in terms of very simple linear-algebraic data.

Let us consider the simplest case where $C$ is a singleton $C_*=\{x_1/2\}$ and has the negative co-orientation. Every situation is locally the same as this situation up to the inversion of the orientation.

\begin{ansatz}
A categorification $\frakC$ of $L_{C_*}$ is a triangulated category $\cC$ with a semi-orthogonal decomposition
\begin{equation}
\cC=\la \cC_0, \cC_1\ra.
\end{equation}
Then the stalk $\frakC_{y_i}$ over $y_i\in J_i$ is set by $\frakC_{y_i}:=\la \cC_0,\cC_i\ra$ and the microstalk $\frakC_{p_{1/2}}$ over $p_{1/2}$ with $\pi(p_{1/2})=x_{0}$ is set by $\cC_1$.
\end{ansatz}
Since we have the localization
\begin{equation}
\frakC_{y_0}\hookrightarrow \frakC_{y_1}\rightarrow \cC_{1},
\end{equation}
by taking the Grothendieck group ant tensor $\bC$ over $\bZ$, we get an exact sequence of $\bC$-vector spaces
\begin{equation}
K_0(\frakC_0)\otimes_\bZ\bC\hookrightarrow K_0(\frakC_{1})\otimes_\bZ\bC\rightarrow K_0(\cC_1)\otimes_\bZ\bC.
\end{equation}
This exact sequence gives a pure sheaf microsupported in $L_{C_*}$, hence the ansatz is justified.

From this ansatz, one can consider a categorification for any co-orientation of $C$. Let us consider the case where the co-orientation over each point in $C$ is negative. In this case, the data $\{V_i\}_{i=0,...,n}$ is a sequence of injective morphisms i.e., a filtered vector space indexed by $\{0,...,n\}$. Then in this case, a categorification $\fC$ is given by a triangulated category $\cC$ with a semi-orthogonal decomposition
\begin{equation}
\cC=\la \cC_0,...,\cC_{n}\ra.
\end{equation}
Then the stalk $\frakC_{y_i}$ over $y_i\in J_i$ is set by $\frakC_{y_i}:=\la \cC_0,..., \cC_i\ra$ and the microstalk $\frakC_{p_{i+1/2}}$ over $p_{i+1/2}$ with $\pi(p_{i+1/2})=x_{i+1/2}$ is set by $\cC_{i+1}$.

\section{Categorification of Legendrian knots}
In this section, let us consider the case $M=\bR^2$. Then $C$ is a curve in this case. To simplify the discussion, we assume that $\iota$ is an immersion which is an embedding up to finite transversal double points. We call these singular points of the immersion ``crossing points''.

\begin{remark}[Cusps]
In general, when we consider ``front projection'' for Legendrian knots, thery can have cusps.
In this note, we will avoid the appearance of cusps. In the presence of cusps, we can still talk about pure sheaves following Kashiwara--Schapira and we can still talk about their categorification by introducing a pair of a category and an integer which categorifies a shifted vector space. However we do not treat this notion in this note, since we do not have any interesting examples of this categorification.
\end{remark}

Let us consider a local picture around a crossing point (Figure \ref{fig4.1}).
\begin{figure}[h]
\begin{center}
  \includegraphics[width=5cm]{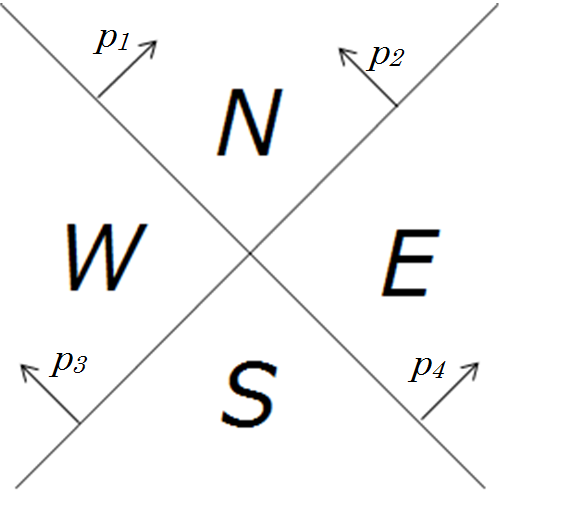}\label{fig4.1}
\end{center}
\caption{A crossing point with a coorientation}
\end{figure}
Here the arrows are indicating the co-orientations. Consider a pure sheaf $\cE$ micro-supported in $L_C$.  For $*\in \{N, E, W, S\}$, $\cE_{*}$ means the stalk of $\cE$ over a point in the corresponding domain indicated in Figure 4.1. Again we have morphisms, $\cE_N\rightarrow \cE_W,\cE_E$ and $\cE_W, \cE_E\rightarrow \cE_S$.

\begin{proposition}[\cite{STZ}]
The sequence
\begin{equation}
\cE_N\rightarrow \cE_W\oplus \cE_E\rightarrow \cE_S
\end{equation}
is exact.
\end{proposition}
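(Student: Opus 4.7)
The plan is to compute the stalk $\cE_0$ of $\cE$ at the crossing point in two different ways --- as sections over an upper and a lower half-disk --- and to extract the exact sequence from the Mayer--Vietoris comparison. I place the crossing at the origin with strands $T_1=\{y=x\}$ and $T_2=\{y=-x\}$, co-oriented into the region $N$ by $\xi_1=(-1,1)/\sqrt{2}$ and $\xi_2=(1,1)/\sqrt{2}$. Transverse vertical slices combined with Proposition~3 of the previous section already produce the four injections $\cE_N\hookrightarrow\cE_W,\cE_E$ and $\cE_W,\cE_E\hookrightarrow\cE_S$ that will appear in the proposed sequence.

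The key microlocal input is that $\SS(\cE)\cap T^*_0M\subseteq\bR_{\geq 0}\xi_1\cup\bR_{\geq 0}\xi_2\cup\{0\}$, so in particular the vertical covectors $\pm(0,1)$ are propagation directions. Kashiwara--Schapira's propagation theorem then furnishes canonical isomorphisms
\[
\RGamma(\{y>0\}\cap U,\cE)\ \xleftarrow{\ \sim\ }\ \cE_0\ \xrightarrow{\ \sim\ }\ \RGamma(\{y<0\}\cap U,\cE)
\]
for $U$ a small disk around the crossing.

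For the upper half I would use the Mayer--Vietoris cover $P_1=(\{y>0\}\cap U)\setminus\overline{E}$ and $P_2=(\{y>0\}\cap U)\setminus\overline{W}$. Each piece contains exactly one upper strand arc, and the pure sheaf on such a single-strand piece fits into a short exact sequence $0\to\underline{\cE_N}\to\cE\to j_!\underline{Q}\to 0$, where $Q$ is the cokernel of the relevant injection and $j$ is the open inclusion of the non-co-oriented region; a routine comparison of constant-sheaf cohomologies forces $\RGamma(P_i,j_!\underline{Q})=0$, so $\RGamma(P_i,\cE)=\cE_N$ in degree zero, and the intersection $P_1\cap P_2$ retracts onto $N$ and contributes $\cE_N$ as well. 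Mayer--Vietoris then yields $\RGamma(\{y>0\}\cap U,\cE)=\cE_N$, and combining with the previous paragraph gives $\cE_0=\cE_N$.

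The analogous Mayer--Vietoris on the lower half, with cover $A=(\{y<0\}\cap U)\setminus\overline{E}$ and $B=(\{y<0\}\cap U)\setminus\overline{W}$, computes $\RGamma(A,\cE)=\cE_W$ and $\RGamma(B,\cE)=\cE_E$ by the same single-strand calculation (now applied to the lower arcs, co-oriented into $W$ and $E$), while $A\cap B=S$ gives $\cE_S$. Plugging the identification $\RGamma(\{y<0\}\cap U,\cE)=\cE_0=\cE_N$ into the long exact sequence immediately produces
\[
0\to\cE_N\to\cE_W\oplus\cE_E\to\cE_S\to 0,
\]
which in particular implies the claimed exactness. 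The hard part is the single-strand $\RGamma$ computation: purity keeps every stalk in degree zero, and the vanishing of the $j_!$-summand on each piece rests on the fact that the open non-co-oriented region is not closed in the piece, so that the pair $(P,P\setminus\text{region})$ has trivial relative cohomology.
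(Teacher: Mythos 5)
Your argument is correct, and in fact it is a genuine proof rather than a comparison: the paper does not prove this proposition itself, it simply cites \cite{STZ}. So you have supplied a self-contained microlocal argument for a statement the paper outsources. The structure is sound: (i) the propagation estimate at the crossing uses only that $(0;\pm dy)\notin\SS(\cE)$, which follows because the Legendrian lift of the front has fibre $\{[\xi_1],[\xi_2]\}$ over the double point, so $L(K)\cap T^*_0M=\bR_{>0}\xi_1\cup\bR_{>0}\xi_2\cup\{0\}$; this gives $\cE_0\cong\RGamma(\{y>0\}\cap U,\cE)\cong\RGamma(\{y<0\}\cap U,\cE)$. (ii) The Mayer--Vietoris covers you chose each see a single strand arc, so the $0\to\underline{\cE_N}\to\cE\to j_!\underline{Q}\to 0$ (resp.\ $\underline{\cE_W}$, $\underline{\cE_E}$) device applies. (iii) Combining gives the triangle $\cE_N\to\cE_W\oplus\cE_E\to\cE_S\xrightarrow{+1}$, which, together with purity keeping each stalk in degree $0$, yields the short exact sequence. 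In fact your triangle proves a slightly stronger statement than the one quoted, namely the acyclicity of the total complex $[\cE_N\to\cE_W\oplus\cE_E\to\cE_S]$ even without the purity hypothesis, which is the form in which STZ state the crossing condition; purity is what turns it into exactness of vector spaces.

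One justification you give is a bit loose and worth tightening: the vanishing $\RGamma(P_i,j_!\underline{Q})=0$ is not really because ``the open non-co-oriented region is not closed in $P_i$'' --- that observation only kills $H^0$. The clean reason is the triangle $j_!j^*\to\mathrm{id}\to i_*i^*$ for the open/closed decomposition of $P_i$ by the non-co-oriented region $V$ and its complement $Z$: both $P_i$ and $Z$ are contractible and the restriction $\RGamma(P_i,\underline{Q})\to\RGamma(Z,\underline{Q})$ is an isomorphism, so the fibre $\RGamma(P_i,j_!\underline{Q})$ vanishes in all degrees. Also note a pleasant byproduct of your computation, namely $\cE_0\cong\cE_N$; the paper uses (without comment) the identification of stalks over the crossing only implicitly in the ensuing discussion of microstalks, and your argument makes it explicit.
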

Using this, we have the following.
\begin{proposition}[\cite{STZ}]
The category of pure sheaves micro-supported in a crossing point is equivalent to the category given by the following data:
\begin{enumerate}
\item Object: $(V_N, V_W, V_E, V_S, f_{NW}, f_{NE}, f_{WS}, f_{ES})$ where $V_{a}$ is a finite dimensional vector space and $f_{ab}\colon V_a\rightarrow V_b$ is a linear inclusion for any $a, b\in \{N, W, E, S\}$. Moreover, they satisfy the following;
a sequence 
\begin{equation}
\cE_N\xrightarrow{f_{NW}+f_{NE}} \cE_W\oplus \cE_E\xrightarrow{f_{WS}-f_{ES}} \cE_S
\end{equation}
is an exact sequence.
\item Morphism: compatible linear maps.
\end{enumerate}
\end{proposition}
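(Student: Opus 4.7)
The plan is to construct the equivalence via a functor $\Phi$ sending a pure sheaf to its tuple of stalks and generalization maps, together with a quasi-inverse $\Psi$ producing a sheaf from the linear data. The fact that $\Phi$ lands in the target category is the heart of the statement: injectivity of each $f_{ab}$ comes from purity exactly as in Proposition 3.1, and exactness of the three-term diamond is precisely the content of Proposition 4.3.

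First I would define $\Phi(\cE):=(\cE_N, \cE_W, \cE_E, \cE_S, f_{NW}, f_{NE}, f_{WS}, f_{ES})$, where each $f_{ab}$ is the generalization map described in Section 2. Near each smooth half-strand emanating from the crossing, the geometry is locally that of Section 3, so the one-dimensional argument preceding Proposition 3.1 applies verbatim: purity forces the cone of each $f_{ab}$ to be an ordinary vector space in degree $0$, which by the direction of the co-orientations indicated in Figure 4.1 must be the cokernel, so $f_{ab}$ is injective. The exact sequence axiom then holds by Proposition 4.3.

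For essential surjectivity I would construct $\Psi$ by building a constructible sheaf $\cE$ with respect to the stratification of a neighborhood of the crossing by the four open quadrants, the four open smooth half-strands, and the crossing point itself. On each open quadrant I set $\cE$ to be the constant sheaf with the prescribed value $V_*$. On each half-strand, the co-direction dictated by Figure 4.1 forces the stalk there to equal the value on one of the two adjacent open quadrants, and the generalization map to the opposite quadrant is to be $f_{ab}$. At the crossing point, take the stalk to be $V_N$; this is the unique choice compatible with Condition 2.1 along all four half-strands, because exactness identifies $V_N$ with the kernel of $V_W\oplus V_E\to V_S$.

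The main obstacle will be verifying that the sheaf so constructed really has microsupport inside $L(K)$ and is pure. Away from the crossing point this reduces to the one-dimensional case of Section 3. At Legendrian points sitting over the smooth half-strands near the crossing, one has to check that the restriction $\RGamma(U,\cE)\to\RGamma(U_+,\cE)$ of Condition 2.1 is a quasi-isomorphism; this follows from a direct computation with the local data, where the exact sequence axiom is exactly what is needed to control the contribution of the crossing stalk. Purity then holds because each microstalk is a cokernel of an injection of ordinary vector spaces. Full faithfulness of $\Phi$ is then straightforward from constructibility: a sheaf morphism is determined by compatible stalk maps, and compatibility with the four $f_{ab}$'s is equivalent to compatibility with every generalization map in the stratified disk.
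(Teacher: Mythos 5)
The paper does not actually prove this proposition; it is cited directly from \cite{STZ} without argument, and the material that immediately follows (the short exact sequence of complexes yielding $\cE_W/\cE_N\cong \cE_S/\cE_W$) is a \emph{consequence} of the proposition, not a proof of it. So there is no internal proof to compare against, and your attempt must be judged on its own terms.

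Your outline (stalk-and-generalization functor $\Phi$, explicit quasi-inverse $\Psi$ built by gluing constant sheaves on the strata) is the right shape of argument, and the treatment of injectivity via purity correctly mirrors the one-dimensional discussion that precedes Proposition 3.1. However, there is a genuine gap in the essential surjectivity step, precisely at the point you identify as "the main obstacle." You check Condition 2.1 only at Legendrian points lying over the four smooth half-strands, and you claim that the exact sequence axiom "is exactly what is needed to control the contribution of the crossing stalk" in that verification. This is not where the exactness enters: Condition 2.1 at a point of $S_{sm}$ is a purely local one-dimensional check carried out in a neighborhood $U$ that can be shrunk away from the crossing, so the crossing stalk and the exact sequence play no role there, and indeed that check succeeds for any choice of injective $f_{ab}$'s, exact or not. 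The real place where exactness is needed is in controlling the microsupport of the constructed sheaf \emph{over the crossing point itself}: without the exactness of $V_N\to V_W\oplus V_E\to V_S$, the microsupport at the crossing can contain covectors outside the union of the two conormal lines of the branches, hence outside $L(K)$. (Concretely, taking $V_N=0$, $V_W=V_E=V_S=\bC$ with $f_{WS}=f_{ES}=\id$ gives injective maps, a pure constructible sheaf satisfying Condition 2.1 on the smooth strata, but a failure of exactness, and the resulting sheaf is \emph{not} microsupported in $L(K)$ at the crossing.) So $\Psi$ as you have argued it does not visibly land in the source category. To close the gap you need a microlocal Morse-theoretic computation at the crossing point (testing $\RGamma_{\{h\geq 0\}}(\cE)_x$ for covectors $dh_x$ in each of the four open sectors between the two conormal lines), and exactness is exactly the hypothesis that makes those four local cohomologies vanish. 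Also note that the reference "Proposition 4.3" in your writeup for the exactness input appears to point at the proposition under proof itself; the exactness fact is the \emph{preceding} proposition from \cite{STZ}.
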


Note that there exists a short exact sequence of complexes
\begin{equation}
0\rightarrow (\cE_N\rightarrow \cE_W)\rightarrow (\cE_N\rightarrow \cE_W\oplus \cE_E\rightarrow \cE_S)\rightarrow \cE_S/\cE_W\rightarrow 0. 
\end{equation}
Since the middle term is acyclic, we have a  quasi-isomorphism $\cE_W/\cE_N\cong \cE_S/\cE_W$.
Since $\cE_{p_1}\cong \cE_W/ \cE_N$ and $\cE_{p_3}\cong \cE_S/\cE_W$, this implies $\cE_{p_1}\cong \cE_{p_4}$. Similarly, one can deduce $\cE_{p_2}\cong \cE_{p_3}$. This is the locally constant property of microstalks \cite{KS}.

To consider a categorification of a crossing point, let us consider the two paths $\gamma_1, \gamma_2$ depicted in Figure 4.2. 

\begin{figure}[h]
\begin{center}
  \includegraphics[width=5cm]{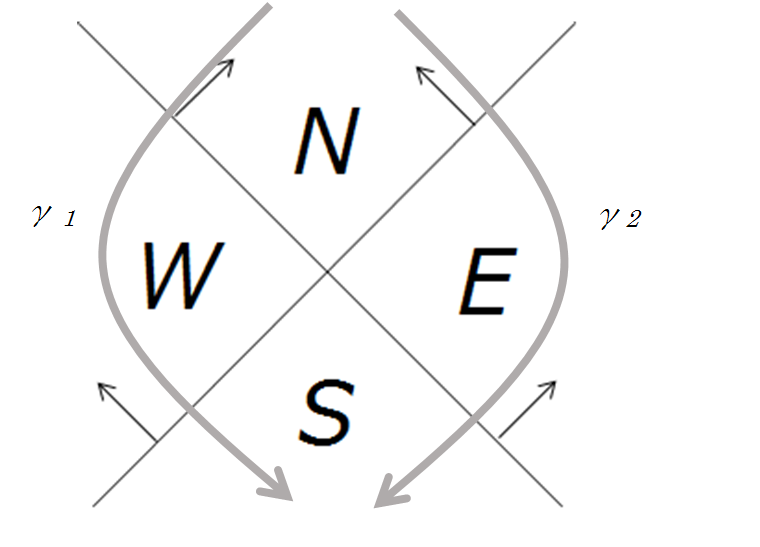}
\end{center}
\caption{Two paths}
\end{figure}
Then the pull back of a categorification of a crossing point along each $\gamma_i$ should be a categorification of two negative Legandrian points over $\gamma_i$. 

From these intuitions, we can imagine some necessary condition to categorify a crossing point. 
\begin{enumerate}
\item Over points $N, E, W, S$, stalks are triangulated categories $\fC_N, \fC_E, \fC_W, \fC_S$. 
\item We have semi-orthogonal decompositions $\fC_S=\la \fC_N, \cC_{11}, \cC_{12}\ra$ along $\gamma_1$ and $\fC_S=\la \fC_N, \cC_{21},\cC_{22}\ra$ along $\gamma_2$.
\item Micro-stalks can be considered as $\fC_{p_1}\cong \cC_{11}$, $\fC_{p_2}\cong \cC_{12}$, $\fC_{p_3}\cong \cC_{21}$, and $\fC_{p_4}\cong \cC_{22}$.
\end{enumerate}
Then the locally constant property of the micro-stalks, it is natural to assume $\cC_{11}\cong \cC_{{22}}$ and $\cC_{21}\cong \cC_{12}$. Hence, from $\gamma_1$ to $\gamma_2$, the semi-orthogonal components of $\fC_S$ are flipped; 
\begin{equation}
\la \fC_N, \cC_{11}, \cC_{12}\ra\rightsquigarrow \la \fC_N, \cC_{12}', \cC_{11}'\ra:= \la \fC_N, \cC_{21}, \cC_{22}\ra.
\end{equation}
To realize this relation naturally, we set the following ansatz.
\begin{ansatz}
A categorification $\frakC$ of a crossing point is triangulated categories $\cC$ and $\cC'$ with semi-orthogonal decompositions
\begin{equation}
\begin{split}
\cC&=\la \fC_N, \cC_{11}, \cC_{12}\ra, \\
\cC'&=\la \fC'_N, \cC_{12}', \cC_{11}'\ra.
\end{split}
\end{equation}
with an equivalence $\cC'\xrightarrow{f}\cC$ such that 
\begin{equation}
\la f(\fC'_N), f(\cC_{12}'), f(\cC_{11}')\ra=\la \fC_N, \bL_{C_{11}}\cC_{12}, \cC_{11}\ra
\end{equation}
as semi-orthogonal decompositions where the right hand side is the left mutation at $\cC_{11}$. Then the stalk are given by $\frakC_{W}:=\la \fC_N, \cC_{11}\ra$, $\frakC_E:=\la\fC_N, \cC_{12}'\ra$, and $\frakC_S:=\fC$. Microstalks are $\frakC_{p_1}:=\cC_{11}$, $\frakC_{p_2}:=\cC_{12}'$, $\frakC_{p_3}:=\cC_{12}$, and $\frakC_{p_4}:=\cC_{11}$,
\end{ansatz}
By taking $K_0(\bullet)\otimes_\bZ\bC$, we get a sheaf micro-supported in a crossing point.

\begin{example}
Let us describe a bit fancy example. Let $\cC$ be a triangulated category with an exceptional collection $\cC=\la E_1, ..., E_n\ra$. Then it is well-known that the braid group $\mathrm{Br}_n$ acts on the set of exceptional collections of $\cC$; let $\sigma_i$ be a positive braiding of $i$-th braid and $i+1$-th braid. Then a part of the exceptional collection $\la E_i, E_{i+1}\ra$ is mutated into $\la E_{i+1}', E_i\ra$. 

For a positive braid $\sigma$, we can associate a Lengendrian $K$. 
\begin{figure}[h]
\begin{center}
  \includegraphics[width=5cm]{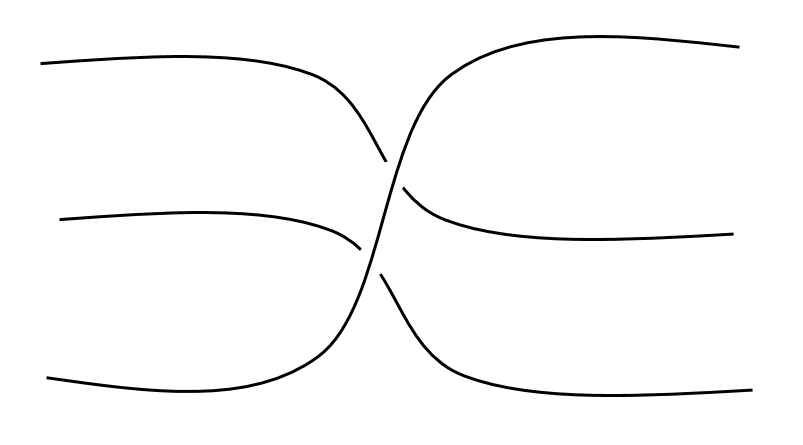}
\end{center}
\caption{Braid}
\end{figure}

\begin{figure}[h]
\begin{center}
  \includegraphics[width=5cm]{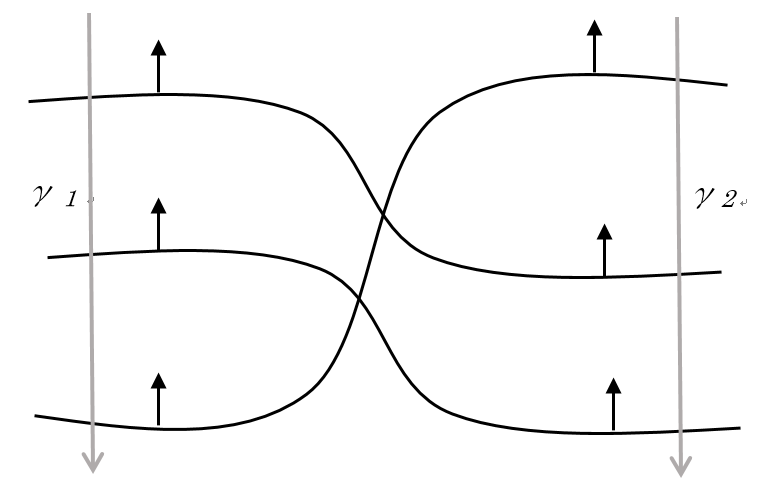}
\end{center}
\caption{Legendrian braid $K$}
\end{figure}

Let us take two paths $\gamma_1$ and $\gamma_2$. Let $\frakC$ be a categorification of $K$. Then the pull-backs along $\gamma_1$ and $\gamma_2$ give two exceptional collections. Suppose the exceptional collection given by $\cC=\la E_1, ..., E_n\ra$. Then the exceptional collection associated to $\gamma_2$ is a mutation associated to $\sigma$!; ``a braid mutation is a categorification of the braid''. \hspace{\fill}$\qed$
\end{example}

\section{Irregular perverse schober}
\subsection{Irregular singularities}
First let us define irregular singularity. Again, let $\bD$ be a unit disk centered at $0$ in $\bC$ and $\cO(*0)$ be the sheaf of meromorphic functions with poles at $0$. Let $\nabla$ be a connection on $\cO(*0)$, then $\nabla$ can be written as
\begin{equation}
\nabla=d+f(z)dz
\end{equation}
in the standard coordinate where $f(z)$ is a meromorphic function with poles at 0. If the order of the pole of $f$ is less than $2$, the connection $\nabla$ is {\em regular}, otherwise {\em irregular}.

One can extend the notion of the regularity to $\cD$-modules. A $\cD$-module is an $\cO$-module with the action of $\partial_z$ with Leibniz rule. In other words, it is a module over the ring $\cD=\cO\la \partial_z\ra$ where the generation is taken inside $\cE nd_\bC(\cO)$. A meromorphic connection $(\cO(*0), \nabla)$ associates a $\cD$-module $\cO(*0)$ where $\partial_z$ acts as $\nabla_{\partial_z}$. Another example is the delta function $\cD$-module $\cD\cdot \delta:=\cD/\cD\cdot z$.

Let $D^b_{\mathrm{coh}}(\cD)$ be the triangulated category of cohomologically coherent $\cD$-modules. Let $D^b_{\mathrm{rh}}(\cD,0)$ be the triangulated hull of regular meromorphic connections $(\cO(*0), \nabla)$ and the delta function $\cD$-module. Let $\mathop{\mathrm{Mod}}_{\mathrm{rh}}(\cD,0)\subset D^b_{\mathrm{rh}}(\cD,0)$ be the full subcategory spanned by objects concentrated in degree 0. Then the regular Riemann--Hilbert correspondence states an equivalence between $\mathop{\mathrm{Mod}}_{\mathrm{rh}}(\cD,0)$ and $\mathrm{Perv}(\bD, 0)$.

In the definition of $\mathop{\mathrm{Mod}}_{\mathrm{rh}}(\cD,0)$, if we replace regular meromorphic connections with irregular meromorphic connections, we obtain irregular holonomic $\cD$-modules $\mathop{\mathrm{Mod}}_{\mathrm{hol}}(\cD,0)$. In the irregular case, to state Riemann--Hilbert correspondence, we have to take a bit more care. 

A key fact is the following Hukuhara--Levelt--Turritten theorem. Let $f=\sum_k c_kz^{k/l}$ be a Puiseux series in $\bC((z^{1/l}))$. Then we set $\cE(f)$ to be a rank 1 free $\bC((z^{1/l}))$-module with the action of $\nabla:=d+df$. 
We set $\bC((z^{1/\infty})):=\bigcup_{l}\bC((z^{1/l}))$. The isomorphism class of $\cE(f)$ only depends on the class $[f]\in \bC((z^{1/\infty}))/z^{-1}\bC[[z^{1/\infty}]]$. 

\begin{theorem}[Hukuhara--Levelt--Turritten theorem]
Let $(\cO(*0)^n,\nabla)$ be a meromorphic connection. Then there exists a subset $\{f_1,...,f_m\}\subset \bC((z^{1/\infty}))/z^{-1}\bC[[z^{1/\infty}]]$ such that the ramified formal completion of $(\cO(*0)^n,\nabla)$ is isomorphic to $\bigoplus \cE(f_i)\otimes R_i$ where each $R_i$ is a regular connection.
\end{theorem}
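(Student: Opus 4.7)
The plan is to reduce the theorem to a purely formal problem over $\bC((z))$ (and its ramified extensions) and then combine a Newton-polygon analysis with an inductive splitting argument. Since the claim concerns only the ramified formal completion, I would first replace $(\cO(*0)^n,\nabla)$ with its associated differential module $M$ over $\bC((z))$; all subsequent moves take place in the formal world.

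Next I would read off the slopes of $M$ from its Newton polygon (computed intrinsically, e.g.\ via a cyclic vector or any choice of lattice). These are non-negative rational numbers; letting $l$ be a common denominator and pulling $M$ back along $z\mapsto z^{1/l}$, I may assume all slopes are non-negative integers. I then focus on the top slope $s$ and the leading coefficient $A_0$ of $z^{s+1}\nabla_{\partial_z}$ modulo lower-order corrections.

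Two cases structure the induction. If $A_0$ has at least two distinct eigenvalues, I would construct a formal gauge transformation $P\in\mathrm{GL}_n(\bC[[z^{1/l'}]])$, possibly after a further ramification ensuring the eigenvalues lie in the base field, so that $P^{-1}\nabla P$ is block-diagonal with respect to the spectral decomposition of $A_0$; this strictly reduces the rank of each block. If instead $A_0$ is scalar $c\cdot I$, I would twist by a suitable $\cE(f)$ with $df=-c\,z^{-s-1}dz$, thereby cancelling the leading term and strictly lowering the top slope. Alternating these operations, the induction terminates with blocks of slope $0$, which are by definition regular meromorphic connections $R_i$; collecting the accumulated exponential factors $\cE(f_i)$ yields the asserted decomposition $\bigoplus\cE(f_i)\otimes R_i$, and the classes $[f_i]\in \bC((z^{1/\infty}))/z^{-1}\bC[[z^{1/\infty}]]$ are well-defined because adding an element of $z^{-1}\bC[[z^{1/\infty}]]$ to $f_i$ modifies $\cE(f_i)\otimes R_i$ only by absorbing a regular factor into $R_i$.

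The main obstacle is the formal block-diagonalization step. Writing $P=I+\sum_{j\ge 1}P_j\,z^{j/l'}$ and expanding $P^{-1}(d+A)P$ in powers of $z^{1/l'}$, one obtains at each order a linear equation of the shape $[A_0,P_j]=B_j$, where $B_j$ is determined by lower-order data. The operator $\mathrm{ad}(A_0)$ is invertible on the off-diagonal subspace relative to the eigenspace decomposition of $A_0$ precisely when its eigenvalues are pairwise distinct, so each $P_j$ is uniquely determined and the formal series makes sense; convergence is automatic in $\bC((z^{1/l'}))$. Once this Hensel-type lifting is established, the remaining inductive bookkeeping — tracking ramifications, slope drops, and exponential twists — is routine.
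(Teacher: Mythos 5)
The paper does not actually prove this theorem; it is quoted as classical background (Hukuhara, Levelt, Turritton) with no argument supplied, so there is no ``paper's proof'' to compare against. Judged on its own merits, your sketch is the standard Newton-polygon-plus-splitting-lemma approach, and the splitting step (solving $[A_0,P_j]=B_j$ order by order, using that $\mathrm{ad}(A_0)$ is invertible off-diagonally) is correctly identified.

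There is, however, a genuine gap in the case analysis. You dichotomize by ``$A_0$ has at least two distinct eigenvalues'' versus ``$A_0$ is scalar,'' but these do not exhaust the possibilities: the leading coefficient $A_0$ may have a \emph{single} eigenvalue with nontrivial Jordan structure. After twisting away the scalar part you are then left with a nonzero \emph{nilpotent} leading coefficient, and in that situation neither the splitting lemma applies (no off-diagonal spectral decomposition) nor does the twist lower the slope. This is precisely the case where Turritton's contribution enters: one must perform a shearing gauge transformation by a diagonal matrix $\mathrm{diag}(z^{a_1},\dots,z^{a_n})$ with rational exponents $a_i$, chosen from the Newton polygon of the nilpotent block, which is what genuinely forces further ramification $z\mapsto z^{1/l'}$ and, after finitely many steps, either splits the block or strictly lowers the slope. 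Your mention of ramification ``ensuring the eigenvalues lie in the base field'' is vacuous over $\mathbb{C}$ and does not address this; the true source of ramification in the theorem is the shearing step in the nilpotent case, and an argument that its iteration terminates (e.g.\ via a well-founded invariant combining slope and dimension of the nilpotent piece) is the non-routine core of the proof that your sketch omits.
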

We call the set of classes $\{ f_1,.., f_m\}\subset \bC((z^{1/\infty}))/z^{-1}\bC[[z^{1/\infty}]]$ {\em the formal type} of $(\cO(*0)^n, \nabla)$. 

Let us fix a formal type  $T:=\{f_1,..., f_m\}$ and fix a lift to a set of meromorphic functions ${\tilde{f}_1,.., \tilde{f}_n}$ (the choice of lift requires a little more care. See the example below). We draw a Legendrian knot in the following procedure~\cite{STWZ}. Let us fix a small positive number $\epsilon$. We set 
\begin{equation}
n_{i}(\theta):=\Re\lb \tilde{f_i}|_{z=\epsilon e^{i\theta}}\rb.
\end{equation}
Here $\tilde{f}_i$ is an element of the class $[f_i]$. The graph of $n_{i}(\theta)$ is living in $S^1\times \bR$. By coorientating towards $-\infty$, we get a front projection of Legendrian knot.

\begin{example}
Consider the Airy equation $\partial_t^2f-tf=0$. This equation has an irreugular singularity at $\infty$. We change the coordinate by $t=1/z$. Then the formal type of this equation is $\{\pm z^{-\frac{5}{2}}\}$.Then $n_{i}(\theta)=(-1)^{i+1}\Re \epsilon^{-\frac{3}{2}}e^{-\frac{3i}{2}\theta}$. The two $n_{i}(\theta)$ form a single multi-valued function by the monodromy. In this case, we have an immersion of a single circle as the following picture (famously first drawn by Stokes):
\begin{figure}[h]
\begin{center}
  \includegraphics[width=5cm]{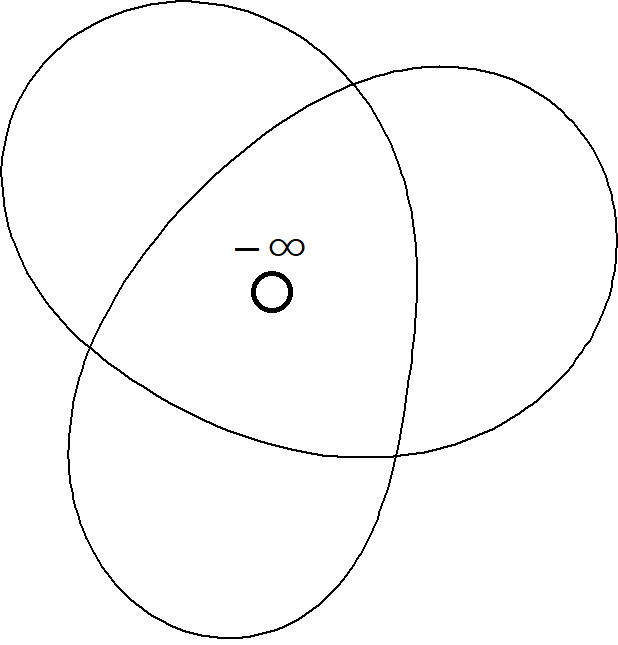}
\end{center}
\caption{Airy knot}
\end{figure}
\end{example}

In general, the picture is an immersion of some circles. We denote the Legendrian knot by $K(T)$. Let $\mathrm{Sh}^p_{L(K(T))}(S^1\times \bR)_0$ be the category of pure sheaves microsupported in $L(K(T))$ such that the stalk at $(\theta,t)\in S^1\times \bR$ with $t\ll 0$ is 0. Let $\mathrm{Mero}^T(\bD,0)$ be the category of meromorphic connections with the formal type $T$, which is a full subcategory $\mathop{\mathrm{Mod}}_{\mathrm{hol}}(\cD,0)$.
\begin{theorem}[Deligne, Malgrange, Shibuya, ..., Shende--Treumann--Williams--Zaslow~\cite{STWZ}]
There exists an equivalence
\begin{equation}
\mathrm{Mero}^T(\bD,0)\simeq \mathrm{Sh}^p_{L(K(T))}(S^1\times \bR)_0.
\end{equation}
\end{theorem}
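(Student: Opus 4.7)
The plan is to factor the equivalence through the theory of Stokes-filtered local systems, following the Deligne--Malgrange framework. First, I would pass to the real oriented blow-up $\widetilde{\bD}\to \bD$ at the origin, whose exceptional fibre is a circle $S^1$ identified with the $\theta$-coordinate in the picture. Classical asymptotic analysis (Sibuya, Malgrange) produces from a meromorphic connection of formal type $T=\{f_1,\dots,f_m\}$ a \emph{Stokes-filtered local system} on $S^1$: a local system $\cE$ together with, for each $\theta$, a filtration of the fibre $\cE_\theta$ indexed by $T$ via the partial order $f_i\le_\theta f_j$ iff $\Re(\tilde f_i-\tilde f_j)(\epsilon e^{i\theta})<0$, equivalently $n_i(\theta)<n_j(\theta)$. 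The graded pieces are the rank-one summands carrying the irregular factors $\cE(f_i)$ twisted by the regular pieces $R_i$ from Hukuhara--Levelt--Turritten.

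Next, I would build the functor $\Phi\colon \mathrm{Mero}^T(\bD,0)\to \mathrm{Sh}^p_{L(K(T))}(S^1\times\bR)_0$ by sending a Stokes-filtered local system $(\cE,\cE_{\le\bullet})$ to the sheaf on the cylinder whose stalk at $(\theta,t)$ is the subspace $\cE_{\le t}^{\theta}:=\sum_{n_i(\theta)\le t}\mathrm{gr}_{f_i}\cE$ inside $\cE_\theta$ (made sheaf-theoretically precise by pulling back $\cE$ from the region $\{t\gg 0\}$ and quotienting as $t$ decreases past each graph). For $t\ll 0$ the stalk vanishes, for $t\gg 0$ it is all of $\cE_\theta$, and the stalk jumps precisely when $t$ crosses some graph of $n_i$. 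Since each graph is cooriented downward by construction, this jump locus is exactly $L(K(T))$, giving $\SS(\Phi(\cE))\subset L(K(T))$. The microstalk at the point of $K(T)$ lying over the $i$-th graph is the associated graded piece $\mathrm{gr}_{f_i}\cE$, concentrated in degree $0$, so $\Phi(\cE)$ is pure.

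The non-trivial geometric check occurs at the crossings of the graphs, which are precisely the Stokes directions where $\Re(\tilde f_i-\tilde f_j)$ changes sign. At such a crossing the partial order $\le_\theta$ swaps the relative position of $f_i$ and $f_j$, and the classical compatibility of the two flags (expressed by upper-triangular Stokes matrices in a trivialisation) translates into the crossing exact sequence $\cE_N\to\cE_W\oplus\cE_E\to\cE_S$ of Section~4, which is exactly the purity datum on the sheaf side. Full faithfulness of $\Phi$ then follows by identifying horizontal morphisms of connections with strictly filtered morphisms of Stokes-filtered local systems, which in turn correspond to maps of sheaves commuting with all the stalk jumps.

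For essential surjectivity, I would invert $\Phi$: given a pure sheaf in $\mathrm{Sh}^p_{L(K(T))}(S^1\times\bR)_0$, its restriction to the region $\{t\gg 0\}$ is a local system on $S^1$, and the family of subsheaves ``$t\le n_i(\theta)$'' recovers a candidate Stokes filtration. The main obstacle I expect is this reverse step: one must verify that any pure sheaf with the prescribed microsupport and vanishing boundary condition automatically satisfies the \emph{global} Stokes compatibilities, not merely the local exact sequence at each individual crossing, so that it comes from a bona fide Deligne--Malgrange Stokes-filtered local system. This uses crucially that the graphs of the $n_i$ realise all Stokes directions of $T$ and that their crossings encode the full list of Stokes relations; this combinatorial matching is the heart of the STWZ argument.
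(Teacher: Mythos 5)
The paper states this theorem purely as a citation (Deligne, Malgrange, Sibuya, Shende--Treumann--Williams--Zaslow) and supplies no proof, so there is no internal argument to compare against. Your sketch follows the route those references take: classify meromorphic connections of formal type $T$ by Stokes-filtered local systems on the boundary circle of the real oriented blow-up, then encode a Stokes filtration as a constructible sheaf on $S^1\times\bR$ whose singular support is $L(K(T))$ and whose stalks vanish for $t\ll 0$. As an outline this is consonant with the literature, and you correctly single out essential surjectivity as the substantive step.

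Two points should be tightened. The formula $\cE_{\le t}^\theta:=\sum_{n_i(\theta)\le t}\mathrm{gr}_{f_i}\cE$ is not literally a subspace of $\cE_\theta$: the associated graded pieces are subquotients with no canonical splitting back into the fibre. What one actually takes is the subsheaf of $\pi^*\cE$ whose stalk at $(\theta,t)$ is the filtered piece $\bigcup_{n_i(\theta)\le t}\cE^\theta_{\le f_i}$; the graded pieces reappear only as the microstalks, which is precisely the purity statement. Relatedly, the parenthetical ``quotienting as $t$ decreases'' has the arrows backwards for the downward coorientation: to have $\SS\subset L(K(T))$ with microstalks in degree $0$, the generization maps toward larger $t$ must be injections cut out by the filtration, not surjections onto quotients. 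Finally, the essential surjectivity you flag is indeed where the argument lives --- one must show that a pure sheaf microsupported in $L(K(T))$ and vanishing for $t\ll 0$ reassembles into a genuine Stokes-filtered local system, including local gradedness on sectors between Stokes lines, compatibility of the two filtrations across each Stokes line, and the global monodromy condition around $S^1$, rather than merely satisfying the local crossing exact sequence of Section 4. That reconstruction is what Deligne--Malgrange--Sibuya and the STWZ translation supply, and a full proof would need to carry it out.
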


Let $\cM$ be a holonomic $\cD$-module. The formal type of $T$ is defined by the formal type of $\cM\otimes \cO(*0)$, which is a meromorphic connection. Let $\mathop{\mathrm{Mod}}^T_{\mathrm{hol}}(\cD,0)$ be the full subcategory of $\mathop{\mathrm{Mod}}_{\mathrm{hol}}(\cD,0)$ spanned by objects of formal type $T$.

Let $\cE$ be an object of the category $\mathrm{Sh}^p_{L(K(T))}(S^1\times \bR)_0$. Let $K(0)$ be the component of $K$ corresponding to $f=0$. Now let us implicitly identify $S^1\times \bR$ with $\bD\bs\{0\}$. Recall the skeleton $L$ considered in section 1 and take a point $p\in L\cap (\bD\bs 0)$. Let $\cE_p^o$ be the microstalk over $\cE$ over $K(0)$ at $p$. We set $M$ the monodromy of $\cE^o_{p}$ around $0$. 

Let us introduce a category $\cC_{T}$ given by the following data:
\begin{enumerate}
\item Object: A pair $(\cE, V, f, g)$ where $\cE$ is an object of $\mathrm{Sh}^p_{L(K(T))}(S^1\times \bR)_0$, $V$ is a finite-dimensional $\bC$-vector space, and linear maps $f\colon \cE^o_{p}\rightarrow V$ and $g\colon V\rightarrow \cE^o_{p}$ such that $\id-f\circ g$ and $\id-g\circ f$ are invertible and $\id-g\circ f=M$.
\item Morphism: Compatible maps.
\end{enumerate}

The following theorem is stated by Malgrange~\cite{Mal} (see also~\cite{Sabbah}). We present a sketch of proof using D'Agnolo--Kashiwara's irregular Riemann--Hilbert correspondence~\cite{DagKas}.
\begin{theorem}[Irregular Beilinson theorem]
There exists an equivalence between $\cC_T$ and $\mathop{\mathrm{Mod}}^T_{\mathrm{hol}}(\cD,0)$.
\end{theorem}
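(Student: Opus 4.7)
The plan is to imitate Beilinson's original construction, replacing the regular Riemann--Hilbert correspondence with D'Agnolo--Kashiwara's irregular one and using the Shende--Treumann--Williams--Zaslow equivalence (Theorem 5.4) to handle the meromorphic part. Given a holonomic $\cD$-module $\cM$ of formal type $T$ at $0$, I would start from the localization triangle
\begin{equation}
\RGamma_{\{0\}}(\cM) \to \cM \to \cM(*0) \xrightarrow{+1}
\end{equation}
where $\cM(*0) := \cM \otimes_{\cO} \cO(*0)$ is a meromorphic connection of formal type $T$. Its image under STWZ provides the sheaf $\cE \in \mathrm{Sh}^p_{L(K(T))}(S^1 \times \bR)_0$. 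The local cohomology $\RGamma_{\{0\}}(\cM)$ is supported at $0$ and is a direct sum of delta modules $\cD/\cD\cdot z$; its underlying finite-dimensional vector space is the $V$ we want.

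Next, I would produce $f$ and $g$ from the canonical and variation morphisms. Along the component $K(0)\subset K(T)$ corresponding to the class $f=0$, the Stokes filtration of $\cM(*0)$ is trivial and the STWZ equivalence reduces to the classical regular Riemann--Hilbert correspondence; consequently $\cE^o_p$ coincides with the nearby cycle space of $\cM(*0)$ at $0$. The maps $f \colon \cE^o_p \to V$ and $g \colon V \to \cE^o_p$ then arise from the connecting morphisms of the localization triangle applied to a small punctured neighborhood, and the identity $\id - gf = M$ (with $M$ the nearby-cycle monodromy around $0$) follows from the standard $\mathrm{can} \circ \mathrm{var}$ identity on the regular part.

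To upgrade this assignment to an equivalence, I would argue that the irregular components of $K(T)$ (those with $f_i \neq 0$) are already fully rigidified by the formal type and carry no further gluing data beyond what STWZ captures; the only nontrivial extension data sit at $0$ and on the regular component $K(0)$. There the situation reduces to the Beilinson theorem recalled in \S 1: the tuple $(\cE^o_p, V, f, g)$ together with the invertibility of $\id - fg$ precisely classifies extensions of $\cM(*0)$ by $V\otimes (\cD/\cD\cdot z)$ that return a holonomic $\cD$-module of formal type $T$. A quasi-inverse is then built by lifting $\cE$ to $\cM(*0)$ via the irregular Riemann--Hilbert correspondence and reconstructing $\cM$ as the cone of a morphism $V\otimes (\cD/\cD\cdot z)[-1] \to \cM(*0)$ determined by $f$ and $g$.

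The hardest step will be the identification of the microstalk $\cE^o_p$ with the nearby cycle space of $\cM(*0)$ in a manner compatible with the monodromy, because D'Agnolo--Kashiwara phrase their correspondence in terms of enhanced ind-sheaves rather than the classical microlocal picture used in STWZ; verifying that in a small neighborhood of $K(0)$ (where the Stokes data trivialize) the enhanced picture indeed collapses to classical nearby cycles so that $\id - gf$ realizes $M$ is the delicate point on which the whole reduction to Beilinson's theorem rests.
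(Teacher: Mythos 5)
Your overall strategy — exploit the localization triangle $\cM \to \cM(*0)$, feed the meromorphic quotient through the STWZ equivalence to get $\cE$, and reduce the gluing data at $0$ to the regular Beilinson theorem — is close in spirit to the paper's argument, and you correctly flag the delicate point (reconciling D'Agnolo--Kashiwara's enhanced ind-sheaf formalism with the classical microlocal picture). However, there is a concrete gap in your identification of the vector space $V$. You set $V$ to be the underlying vector space of the local cohomology $\RGamma_{\{0\}}(\cM)$. This is not the Beilinson vanishing-cycle space. Already for $\cM = \cO$ (formal type $T = \{0\}$), the vanishing cycles vanish, yet $\RGamma_{\{0\}}(\cO) \cong (\cD/\cD z)[-1]$ is nonzero (and sits in the wrong degree), so your $V$ would be nonzero while the correct $V$ is $0$. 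The object $\RGamma_{\{0\}}(\cM)$ is a complex, not a single $\cD$-module, and does not directly provide the pair $(V,f,g)$ with $\id - gf = M$.

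The paper avoids this by inserting an extra step: after forming $Q := \Cone(E \to Sol(\cM))$ (the enhanced-ind-sheaf shadow of your $\RGamma_{\{0\}}(\cM)$), it does \emph{not} read off $V$ from $Q$; instead it maps $Q[-1]$ into the local system $L'$ attached to the $f=0$ component of $K(T)$ and takes the cone, producing a genuine perverse sheaf $P$ on the disk. Only then is the regular Beilinson theorem applied to $P$ to extract $(V, W = \cE^o_p, f, g)$. This auxiliary cone is exactly what reconciles the local-cohomology data with the vanishing-cycle data, and it is the step missing from your sketch. Similarly, for the inverse direction, the paper glues more carefully: the morphism $\cL \to \cE$ from the monodromy local system is promoted to a morphism of enhanced ind-sheaves $L \to E$, a second morphism $L \to P$ is produced from the perverse sheaf side, and $\cM$ is recovered from the kernel of $L \to P \oplus E$ via the irregular perversity criterion; your proposed reconstruction as the cone of a single map $V \otimes (\cD/\cD z)[-1] \to \cM(*0)$ is a different (and, as stated, incorrectly rotated) triangle that does not carry the gluing information encoded by $f$ and $g$. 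Finally, the paper settles the compatibility you rightly worry about by staying inside the enhanced ind-sheaf framework throughout and invoking the observation from \cite{DagKas2} that restriction to a small boundary circle recovers $\cE$ up to Legendrian isotopy — it does not attempt the collapse to classical nearby cycles that your plan requires.
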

\begin{proof}
We only sketch how to construct the corresponding objects. Suppose given an object in $\cC_T$. The regular Beilinson theorem (Theorem 1.1) gives us a perverse sheaf $P$ from the data of $(\cE^o_{p}, V, f, g)$. On the other hand, we have an enhanced ind-sheaf~\cite{DagKas} (or irregular $\bC$-constructible sheaf~\cite{Kuw}) over $\bD$ corresponding to $\cE$, which will be denoted by $E$. Let us take a small open disk $D$ around $0$ and consider the restriction of $E$ to $S^1=\partial D$. Let us put the perverse sheaf $P$ on $D$ with singularity on $0$ as an enhanced ind-sheaf.

As noted in \cite{DagKas2}, the restriction of $E$ to $S^1$ is precisely $\cE$ up to Legendrian isotopy. Let $U$ be the the connected component of the complement of $K(0)$ which contains $\infty$. Let $\cL$ be the local system on $U$ with monodromy $M$. Then there exists a canonical morphism $\cL\rightarrow E|_{S^1}=\cE$. By shrinking $S^1$, this gives a morphism $L\rightarrow E$ as enhanced ind-sheaves where $L$ is a local system over $(D\bs 0)\times \bR_{>0}$. Note that there also exists a canonical morphism from from $L$ as enhanced sheaves.

Take the gluing i.e. the kernel of $L\rightarrow P\oplus E$. This satisfies the irregular perversity condition~\cite{Kuw}, hence gives an object of $\mathop{\mathrm{Mod}}^T_{\mathrm{hol}}(\cD,0)$.

On the other hand, given an object $\cM$ of $\mathop{\mathrm{Mod}}^T_{\mathrm{hol}}(\cD,0)$, consider a meromorphic connection $\cM\otimes \cO(*0)$. By taking the Riemann-Hilbert image of this connection, we get an object $\cE$ of $\mathrm{Sh}^p_{L_{K(T)}}(S^1\times\bR)_0$. Again, we denote the counterpart as an enhanced ind-sheaf by $E$. Consider the exact triangle 
\begin{equation}
E\rightarrow Sol(\cM)\rightarrow Q\xrightarrow{[1]}
\end{equation}
which is the image of the exact triangle extending the morphism $\cM\rightarrow \cM\otimes \cO(*0)$ under D'Agnolo--Kashiwara functor $Sol$. Then $Q$ is supported over 0. Let $L'$ be the local system corresponding to $f=0$-part of $E$. Then there exists a morphism $E\rightarrow L'$ as enhanced sheaves. Composing this map with the extension map $Q\rightarrow E[1]$, we get a perverse sheaf as the cone of $Q[-1]\rightarrow L'$.
\end{proof}

\subsection{Irregular perverse schober}
Let us define an irregular perverse schober. For a given formal type $T:=\{f_1,..., f_n\}$, we get a Legendrian knot $K(T)$.
\begin{definition}
Suppose $0\not\in T$. A Stokes schober of the formal type $T$ is a categorification of $K(T)$ following Ansatz 2. 
\end{definition}
A Stokes schober gives a set of semi-orthogonally decomposed triangulated categories labeled by Stokes rays. The left mutation of a semi-orthogonal decomposition in this sequence is identified with the next semi-orthogonal decomposition by an equivalence. Note that walking around $0\in \bD$, we get a monodromy autoequivalence for each $\cC_i$. This set of data was originally used in Sanda--Shamoto~\cite{SS} to treat Dubrovin type conjecture (see also Example 5.10).

Recall $L$ a skeleton of $\bD$.
\begin{definition}
Suppose $0\in T$ and $f_i=0$. An irregular perverse schober of the formal type $T$ is given by the following data:
\begin{enumerate}
\item A categorification $\frakC$ of $K(T)$ following ansatz 2. Let $\cC=\la \cC_1,..., \cC_i,..., \cC_{n} \ra$ be the semi-orthogonal decomposition associated to $\fC$ along $L$.
\item A triangulated category $\cD$ and a perverse schober consisting of $\cD$ and $\cC_i$ such that the spherical twist for $\cC_i$ is the same as the monodromy autoequivalence of $\cC_i$.
\end{enumerate}
\end{definition}
The author was informed that Sanda--Shamoto obtained the same definition previously. The irregular Beilinson theorem tells us that this is actually a categorification of an irregular singularity i.e., by taking $K_0\otimes_\bZ\bC$, it gives an irregular $\cD$-module.

\begin{example}[N-spherical functors]
Consider the knot given in Figure \ref{figsphe}.
\begin{figure}[h]
\begin{center}
  \includegraphics[width=5cm]{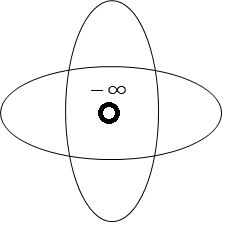}
\end{center}
\caption{Spherical functor}\label{figsphe}
\end{figure}

For example, a formal type $T=\{1/z^2, \sqrt{-1}/z^2\}$ gives the knot. By the definition, the corresponding irregular perverse schober is given by the data (here we assume the involved equivalences in Ansatz 2 are the identities): a semi-orthogonal decomposition $\la \cC_1, \cC_2\ra$ such that the mutation of $\la \cC_1, \cC_2\ra$ is 4-periodic. Recall the following theorem.
\begin{theorem}[Halpern-Leistner--Shipman~\cite{HLS}]
A four-periodic semi-orthogonal decomposition gives a spherical functor and the converse is also true.
\end{theorem}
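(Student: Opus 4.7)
The plan is to establish a dictionary between length-two SODs of a stable category $\cC$ and exact functors between the two components (``gluing data''), and then to check that the ``four-periodic mutation'' condition matches, term by term, the defining cone conditions of a spherical functor.

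First I would fix the dictionary. Given a SOD $\cC = \la \cC_1, \cC_2 \ra$, the inclusions $i_k: \cC_k \hookrightarrow \cC$ admit both adjoints, and the SOD is equivalent to the data of $\cC_1, \cC_2$ together with a gluing functor $F: \cC_2 \to \cC_1$ built from $i_1$ and its left adjoint. Conversely, any exact functor $F$ between stable categories with its two adjoints reconstructs $\cC$ as a glued category equipped with such a SOD. Under this dictionary, the left mutation $\bL_{\cC_1}$ sends $X \in \cC_2$ to an object whose image in $\cC$ is $\Cone(i_1 F X \to i_2 X)$ up to shift, and the mutated SOD is again a glued category, now by a functor expressible through the right adjoint $F^R$.

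Second I would iterate and match. A direct computation in the glued description shows that after several applications of $\bL$ the two components of the SOD are transformed by the twist $T_F = \Cone(F F^R \to \id_{\cC_1})$ and the cotwist $C_F = \Cone(\id_{\cC_2} \to F^R F)[-1]$, and that the mutation cycle returns to the original SOD after four steps exactly when these endofunctors are autoequivalences. Since the latter is precisely Anno--Logvinenko's definition of sphericality, this gives both directions at once: a $4$-periodic SOD yields $F$ spherical, while any spherical $F$ yields a $4$-periodic SOD through the glued category.

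The main technical obstacle will be the coherence of these identifications. Producing an abstract equivalence between the fourth-mutated SOD and the original is straightforward on objects, but exhibiting them as \emph{the same} SOD of $\cC$ --- so that the periodicity is a genuine identification and not merely an equivalence up to some ambient autoequivalence --- requires careful tracking of the unit and counit triangles of the various adjunctions. This is cleanest in the stable $\infty$-categorical formulation, where the mutations assemble into a $\bZ/4$-action on the groupoid of SODs of $\cC$ and the comparison with the twist/cotwist becomes a statement about coherent natural transformations of adjunctions.
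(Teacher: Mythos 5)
The paper does not prove this statement; it is cited verbatim from Halpern-Leistner--Shipman~\cite{HLS} and used as a black box to identify the irregular perverse schober of type $T=\{1/z^2,\sqrt{-1}/z^2\}$ with a spherical functor. So there is no in-paper proof to compare against.

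That said, your sketch is the correct and standard strategy, and it is essentially the argument carried out in the cited reference: encode a length-two SOD $\cC=\la\cC_1,\cC_2\ra$ as a gluing functor $F$ together with its two adjoints, compute each left mutation explicitly in the glued description, and observe that iterating four times conjugates the components by the twist and cotwist of $F$; $4$-periodicity is then literally the statement that these are autoequivalences, which is the Anno--Logvinenko definition of sphericality. Two small cautions on the details. First, you need both inclusions $i_1,i_2$ to admit \emph{both} adjoints (i.e.\ both components admissible), otherwise the mutation sequence is not even defined in all four directions; this is an honest hypothesis, not something you get for free from a single SOD. Second, your description of $F$ as built ``from $i_1$ and its left adjoint'' has the handedness slightly off: in the usual convention for $\la\cC_1,\cC_2\ra$ the gluing functor is $i_1^R\,i_2\colon\cC_2\to\cC_1$, using the \emph{right} adjoint of $i_1$; this matters later when you decide which adjoint of $F$ appears in the mutated SOD. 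Finally, the coherence point you raise at the end --- that ``returns to itself after four steps'' must be a genuine identification of SODs of $\cC$, not merely an abstract equivalence of the pieces --- is exactly the subtle part, and in~\cite{HLS} it is handled by carefully tracking the unit/counit triangles; your instinct that a stable $\infty$-categorical treatment cleans this up is sound, but it is not a gap you can wave away in a one-line remark, since the whole content of the theorem lives there.
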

Hence this irregular perverse schober gives a spherical functor. Note that there is no $\cD$ since $0\not\in T$.

One can also consider the following knot where the number of crossing is $2N$.
\begin{figure}[h]
\begin{center}
  \includegraphics[width=8cm]{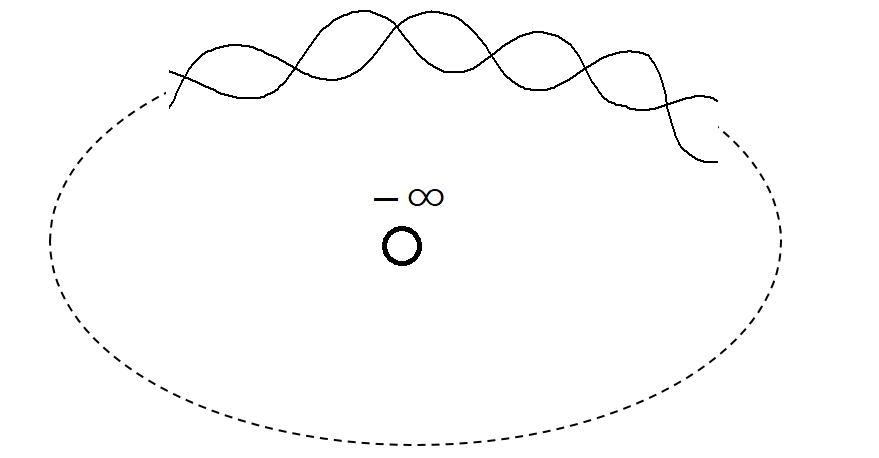}
\end{center}
\caption{$N$-Spherical functor}
\end{figure}

By the same argument, this associates an N-spherical functor in the sense of Dyckerhoff--Kapranov--Schechtman~\cite{DKS}. \hspace{\fill}$\qed$
\end{example}

\begin{example}[Quantum $\cD$-modules]
The relation between irregular singularities and semi-orthogonal decompositions has been observed in the context of Dubrovin's conjecture. In particular, the relation between mutation of SOD and Stokes structure was studied and conjectured by Sanda--Shamoto~\cite{SS}. In our language, their conjecture can be rephrased as follows:
\begin{conjecture}[(a part of) Sanda--Shamoto's Dubrovin conjecture~\cite{SS}]
Let $X$ be a Fano manifold. There exists an irregular perverse schober whose nearby cycle is $D^b(X)$ and the Hochschild decategorification gives a Stokes data which is irregular Riemann--Hilbert image of the quantum $\cD$-module of $X$ around $0\in \bP^1_\hbar$. 
\end{conjecture}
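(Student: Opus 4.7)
The plan is to construct the irregular perverse schober directly from a full exceptional collection on $D^b(X)$, and then to identify its decategorification with the Stokes data of the quantum $\cD$-module by invoking the irregular Beilinson theorem (Theorem 5.5). The construction rests on the ``first part'' of Dubrovin's conjecture: for a Fano $X$ with generically semisimple small quantum cohomology, $D^b(X)$ admits a full exceptional collection $\la E_1,\ldots,E_n\ra$ whose Euler-pairing Gram matrix coincides with the Stokes matrix of the quantum connection at $0\in\bP^1_\hbar$. I would take this as an input to the construction; it is verified for projective spaces, Grassmannians, many toric Fano manifolds, and some other sporadic cases.

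First I would identify the Legendrian knot $K(T)$. In the coordinate $z=\hbar$ the formal type at $0\in\bP^1_\hbar$ is $T=\{u_1/\hbar,\ldots,u_n/\hbar\}$, where the $u_i$ are the eigenvalues of quantum multiplication by the Euler field (equivalently, canonical coordinates on the Frobenius manifold). Under semisimplicity these are distinct, and the recipe of Example 5.3 produces sinusoidal graphs $n_i(\theta)=\Re\!\lb (u_i/\epsilon)e^{-i\theta}\rb$ whose pairwise transversal intersections are exactly the Stokes rays. Each crossing corresponds to an adjacency of two Stokes sectors.

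Next I would assemble the Stokes schober following Ansatz 2 and Definition 5.7: to each sector between adjacent Stokes rays assign the category $\cC=D^b(X)$ equipped with a full exceptional collection, so the one-dimensional semi-orthogonal components are $\cC_{ii}:=\la E_i\ra$; at every Stokes crossing categorify the exchange of dominance between $n_i$ and $n_j$ by the corresponding left mutation in $\la E_i,E_j\ra$. The local consistency at each crossing is automatic from Example 5.10 (it is the $N$-spherical functor attached to a single crossing), and traversing the full circle once produces a global autoequivalence which, by the shape of the braid of mutations, is the Serre functor of $D^b(X)$ up to shift; this matches the expected monodromy of the quantum connection. Decategorifying by the irregular Beilinson theorem, the microstalks become $K_0(\la E_i\ra)\otimes_\bZ\bC\cong\bC$ with transition maps determined by the Euler pairing, so matching this Stokes datum with the irregular Riemann--Hilbert image of the quantum $\cD$-module reduces precisely to the Dubrovin matching.

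The main obstacle is exactly this last reduction: the analytic comparison between the Stokes matrix of the quantum connection and the Euler pairing of the exceptional collection is the still-open heart of Dubrovin's conjecture. A secondary technical issue is how to accommodate a possible zero eigenvalue of the Euler field, in which case Definition 5.8 requires the auxiliary datum of a category $\cD$ and a perverse schober whose spherical twist matches the monodromy of the $f=0$ summand; for Fano $X$ this should not appear in the strictly small quantum setting, but a formulation robust enough to include the big quantum connection or limit points with $0$ as an eigenvalue would need to incorporate it.
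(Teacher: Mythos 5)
This is a \emph{conjecture}, not a theorem: the paper does not prove it, and indeed no proof is known. It is stated as a reformulation, in the author's language, of (part of) the Dubrovin conjecture as refined by Sanda--Shamoto, so there is no ``paper's own proof'' to compare against. Your proposal should therefore not claim to prove the statement, and to your credit you do not: your final paragraph correctly identifies that the analytic matching between the Stokes matrix of the quantum connection and the Euler pairing of an exceptional collection is precisely the open content, and constructing the schober from a full exceptional collection presupposes exactly what must be shown.

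What you have written is a reasonable \emph{construction-and-reduction} outline: assuming generically semisimple quantum cohomology and a full exceptional collection, you build a Stokes schober whose mutation braiding at crossings is prescribed by left mutations, and you reduce the decategorified comparison to the classical Dubrovin matching. This is consistent with the intent of Sanda--Shamoto's formulation and with the machinery of Sections 4--5 of the paper. Two caveats worth flagging even within this outline: (i) assigning $\cC_{ii}=\la E_i\ra$ requires semisimplicity with distinct eigenvalues of $E\star$; the paper's Definition 5.8 is formulated without this assumption, so the general statement of the conjecture is broader than your construction covers; and (ii) identifying the total monodromy autoequivalence with the Serre functor (up to shift) is itself a nontrivial expectation that would need justification rather than assertion. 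But the central point stands: what you have is a plausible program reducing the conjecture to known open problems, not a proof, and the paper neither supplies nor claims one.
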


Irregular singularities of quantum $\cD$-modules appear not only in $\hbar$-directions but also Kaehler directions. In the work announced by Iritani, irregular singularities of quantum $\cD$-module are observed in the situation of toric flips. By the philosophy of ``discrepant resolution conjecture'', this should correspond to semi-orthogonal decompositions of the derived category of coherent sheaves and should form an irregular perverse schober. The B-model consideration of this subject will be explored in a work in progress joint with Will Donovan. \hspace{\fill}$\qed$
\end{example}

\section*{Acknowledgment}
This note is based on my talk in ``Categorical and analytic invariants in algebraic geometry IV'' held in Hokkaido 2018. The author thanks the organizers and participants for their interests.

The author admires 75th birthday of Professor Kyoji Saito and thanks his enormous contribution to settle IPMU mathematics, where the author works as a postdoc in a comfortable atmosphere. Also, the author thanks Saito sensei for organizing reading seminars in 2016 where the author learned about irregular singularities and for having daily bento lunches together.

The author also thanks Will Donovan, Mikhail Kapranov, and Vivek Shende for discussion about this topic and Will Donovan and Fumihiko Sanda for comments on an early draft. Especially, the first idea of this note came from Mikhail's talk on N-spherical functors in Osaka 2017.

This work was supported by World Premier International Research Center Initiative (WPI), MEXT, Japan and JSPS KAKENHI Grant Number JP18K13405.

\bibliographystyle{amsalpha}
\bibliography{CategorificationofLegendrianknotsarxiv2.bbl}

\noindent

\end{document}